\newfont{\Bbb}{msbm10 scaled\magstephalf}
\newtheorem{thm}{Theorem}[section]
 \newtheorem{cor}[thm]{Corollary}
 \newtheorem{lem}[thm]{Lemma}
 \theoremstyle{definition}
\theoremstyle{remark}
 \newtheorem{rem}[thm]{Remark}
 \numberwithin{equation}{section}
\begin{document}

\title[Hypercyclic Composition Operators]{Hypercyclic composition operators on the little Bloch space $\mathcal{B}_0$ and the Besov spaces $B_p\;$}

\author[Y.X. Laing and Z.H.Zhou]{Yu-Xia Liang and Ze-Hua Zhou$^{*}$}

\address{\newline Yu-Xia Liang\newline School of Mathematical Sciences,
Tianjin Normal University, Tianjin 300387, P.R. China.}
\email{liangyx1986@126.com}

\address{\newline  Ze-Hua Zhou\newline Department of Mathematics, Tianjin University, Tianjin
300072, P.R. China.\newline
Center for Applied Mathematics, Tianjin University, Tianjin 300072,
P.R. China.} \email{zehuazhoumath@aliyun.com;
zhzhou@tju.edu.cn}

\keywords{Hypercyclic, composition operator,  Bloch space, Besov space}
\subjclass[2010]{Primary: 47A16; Secondary: 47B38, 47B37.}%
\date{}

\date{}
\thanks{\noindent $^*$Corresponding author.\\
The authors were supported in part by the National Natural Science Foundation of China (Grant Nos. 11371276), the Doctor Fund of Tianjin Normal University (Grant Nos. 52XB1514).}

\begin{abstract}
  Let $S(\mathbb{D})$ be the collection of all  holomorphic self-maps on $\mathbb{D}$ of the complex plane $\mathbb{C}$, and $C_{\varphi}$  the composition operator  induced by $\varphi\in S(\mathbb{D})$. We obtain that there are no hypercyclic composition operators on the little Bloch space $\mathcal{B}_0$ and the Besov space $B_p$.
\end{abstract}







\maketitle

\section{Introduction}

Let $\mathbb{D}=\{z\in \mathbb{C}:\;|z|<1\}$ be the unit disc in the complex plane $\mathbb{C}$ and $S(\mathbb{D})$ be the collection of all   holomorphic self-maps on $\mathbb{D}$. We denote $dA(z)=dx dy$  the Lebesgue area measure on $\mathbb{C}$. For the composition operator $C_{\varphi}$  induced by $\varphi\in S(\mathbb{D})$ is defined as $$C_\varphi f(z)=f\circ \varphi(z) ,\;f\in H(\mathbb{D}),\;z\in \mathbb{D}.$$

The one-to-one holomorphic functions that map $\mathbb{D}$ onto itself, called the \emph{M\"{o}bius} transformations, and denoted by $\mathcal{M}$ (also $Aut(\mathbb{D})$), have the form $\lambda \varphi_a,$ where $|\lambda|=1$ and $\varphi_a$ is the basic conformal automorphism defined by
$$\varphi_a(z)=\frac{a-z}{1-\bar{a}z},\;\;z\in \mathbb{D},$$  for $a\in \mathbb{D}$. The following identities are easily verified:
\begin{eqnarray}&&1-|\varphi_a(z)|^2=\frac{(1-|a|^2)(1-|z|^2)}{|1-\bar{a}z|^2}\nonumber\\\mbox{and}\nonumber
\\&&(1-|z|^2)|\varphi_a'(z)|=1-|\varphi_a(z)|^2.\label{1.1}\end{eqnarray}

 A linear space $X$ of analytic functions on the open unit disk $\mathbb{D}$ is said to be
\emph{M\"{o}bius-invariant}, \;if $f\circ S \in X$ for all $f\in X$ and all  $S\in \mathcal{M}$  and $X$ has
a seminorm $\|\;\|_X$ such that $\|f\circ S\|_X=\|f\|_X$ for each $f \in X$ and each  $S\in \mathcal{M}$.

The well-known\emph{ M\"{o}bius-invariant} function space--the Besov spaces $B_p$ $(1<p<\infty)$  are defined as following
$$B_p=\{f\in H(\mathbb{D}):\;\;\|f\|_{B_p}^p=\int_{\mathbb{D}} |f'(z)|^p (1-|z|^2)^{p-2} dA(z)<\infty\},$$ that is, $f\in B_p$ if and only if the function $(1-|z|^2)f'\in L^p(\mathbb{D},d\lambda),$ where
$$d\lambda(z)=\frac{dA(z)}{(1-|z|^2)^2}.$$  Although the measure $\lambda$ is not a finite measure on $\mathbb{D},$ it is a \emph{M\"{o}bius-invariant}. Indeed, by (\ref{1.1}) $$d\lambda (\varphi_a(z))=\frac{|\varphi_a'(z)|^2}{(1-|\varphi_a(z)|^2)^2} dA(z)=\frac{dA(z)}{(1-|z|^2)^2}=d\lambda(z).$$
Hence we have the following change-of-variable formula
\begin{eqnarray*}\int_{\mathbb{D}} f\circ \varphi_a(z)  d\lambda(z)=\int_{\mathbb{D}} f(u) d\lambda(u), \end{eqnarray*} for every positive measurable function $f$ on $\mathbb{D}$. From which it is easily seen that \begin{eqnarray}\|f\circ \varphi_a\|_{B_p}=\|f\|_{B_p},\label{1.2}\end{eqnarray} and the above identity  also holds for $S=\lambda \varphi_a \in \mathcal{M}$ with $|\lambda|=1$. Thus  \begin{eqnarray*}\;\mbox{if}\;f\in B_p\;\;\mbox{then}\;\; f\circ S\in B_p\;\;\mbox{for all}\;\;S\in \mathcal{M}. \end{eqnarray*} That is, $B_p\;(1<p<\infty)$ are \emph{M\"{o}bius-invariant} spaces.

For $p=1,$ the  Besov space $B_1$  consists of the analytic functions $f$ on $\mathbb{D}$ that admit the representation $$f(z)=\sum_{n=1}^{\infty} a_n \varphi_{\lambda_n}(z),\;\;z\in \mathbb{D},$$
where $\{a_n\}\in l^1$ and $\lambda_n\in \mathbb{D}$ for $n\in \mathbb{N}.$ The norm in $B_1$ is defined as
$$\|f\|_{B_1}=\inf \left\{ \sum_{n=1}^{\infty}|a_n|:\;f(z)=\sum_{n=1}^{\infty} a_n \varphi_{\lambda_n}(z),z\in \mathbb{D}\right\}.$$
It is evident that $B_1$ is the \emph{M\"{o}bius} invariant subset of the bounded analytic functions space $H^\infty$. On the other hand, $B_1$ has the following definition,
$$B_1=\{f\in H(\mathbb{D}):\;\;\|f\|_{B_1}=\int_{\mathbb{D}} |f''(z)|dA(z)<\infty\},$$ even though the above semi-norm is not \emph{M\"{o}bius-invariant}, the Besov space $B_1$ is the minimal \emph{M\"{o}bius-invariant} space (see, e.g. \cite{AF1, AF2}).

 It is well-known that   $B_p\; (1< p<\infty)$ are Banach spaces endowed with the norm denoted by $\|f\|_p$, $$\|f\|_p^p=|f(0)|^p+\|f\|_{B_p}^p.$$ Another \emph{M\"{o}bius-invariant} space of analytic functions on $\mathbb{D}$ is the Bloch space $\mathcal{B}$, $$\mathcal{B}=\{f\in H(\mathbb{D}):\;\|f\|_{\mathcal{B}}=\sup\limits_{z\in \mathbb{D}}(1-|z|^2)|f'(z)|<\infty\}.$$
By (\ref{1.1}) it follows that \begin{eqnarray}\|f\circ\varphi_{a}\|_\mathcal{B}&=&\sup\limits_{z\in \mathbb{D}} (1-|z|^2)|(f\circ\varphi_a)'(z)|\nonumber\\&=&\sup\limits_{z\in \mathbb{D}}(1-|z|^2)|f'(\varphi_a(z))\varphi_a'(z)|\nonumber\\&=&\sup\limits_{z\in \mathbb{D}}(1-|\varphi_a(z)|^2)|f'(\varphi_a(z))|\nonumber\\&=&\|f\|_{\mathcal{B}},\label{1.3}\end{eqnarray} for all $a\in \mathbb{D}$, and the above identities also hold  for all $S\in \mathcal{M}.$  That is, \begin{eqnarray*}\;\mbox{if}\; f\in \mathcal{B}\;\;\mbox{then}\;\; f\circ S \in \mathcal{B}\;\;\mbox{for all}\;\;S\in \mathcal{M}.\end{eqnarray*} Hence $\mathcal{B}$ is a \emph{M\"{o}bius-invariant} space.

  The little Bloch space $\mathcal{B}_0$ consists of all $f\in \mathcal{B}$ such that
$$ \lim\limits_{|z|\rightarrow 1}(1-|z|^2)|f'(z)|=0. $$  Replacing "$\sup\limits_{z\in \mathbb{D}}$" by "$\lim\limits_{|z|\rightarrow 1}$" in (\ref{1.3}),  we get that $f\circ \varphi_a \in \mathcal{B}_0$ for every $f\in \mathcal{B}_0$ and $a\in \mathbb{D}.$  Similarly, $\mathcal{B}_0$ is also a\emph{ M\"{o}bius-invariant} space. Both the Bloch space  $\mathcal{B}$ and the little Bloch space $\mathcal{B}_0$ are Banach spaces under the norm $$ \|f\|_{Bloch}=|f(0)|+\|f\|_{\mathcal{B}}.$$

The above \emph{M\"{o}bius-invariant} spaces have the relationship $B_1\subset B_p\subset B_q \subset \mathcal{B}$ for each $1<p<q<\infty$ (see, e.g \cite[Lemma 1.1]{Tj}). Moreover, $B_1$ is a subset of the little Bloch space $\mathcal{B}_0$ (see \cite{Zhu1}) and the Bloch space $\mathcal{B}$ is maximal among all \emph{M\"{o}bius-invariant} Banach spaces of analytic functions on $\mathbb{D}$ (see, e.g. \cite{RT} ). The Besov space $B_2$ is often referred to as the Dirichlet space $\mathcal{D}$, which is a Hilbert space with inner product $$\langle f,\;g\rangle=f(0)\overline{g(0)}+\int_{\mathbb{D}} f'(z) \overline{g'(z)} dA(z)/\pi.$$

The problem of boundedness and compactness of $C_\varphi$ has been studied in many  function spaces, we refer the readers to the books \cite{CM,Tj,Zhu,Z}.  In the recent time, the papers \cite{BS1,BS2,GM,GS1,G1,MW} play important parts  in the theory of the hypercyclicity  of composition operators $ C_\varphi$ acting on analytic function spaces.

In the following, we introduce some definitions in dynamic systems. Let $L(X)$ denote the space of all linear continuous operators on a \emph{separable infinite dimensional Banach space} $X.$ For a positive integer $n$, the $n$-th  iterate of $T\in L(X)$ denoted by $T^{n},$ is the function obtained by composing $T$ with itself $n$ times.

 A continuous linear operator $T\in L(X)$ is called \emph{hypercyclic} provided
 there is some $f\in X$ such that the orbit  $$Orb(T,f)=\{ T^n f: n=0,1,\cdots\}$$  is dense in $X.$  Such a vector $f$ is said to be a \emph{ hypercyclic} vector for $T.$  Therefore, if a Banach space $X$ admits a \emph{hypercyclic} operator,  $X$ must be separable and infinite dimensional.

  Since the polynomials are dense in the little Bloch space $\mathcal{B}_0$ (see, e.g. \cite[Proposition 3.10]{Zhu}) and the polynomials are dense in $B_p\; (1\leq p<\infty)$ (see, e.g. \cite[Proposition 6.2]{Zhu}), thus the little Bloch space $\mathcal{B}_0$  and $B_p\;(1\leq  p <\infty)$ are \emph{separable infinite dimensional Banach spaces}. This is why we investigate the composition operators on $\mathcal{B}_0$ and $B_p\;(1\leq p<\infty)$.  For motivation, examples and background about linear dynamics we refer the reader to the books \cite{BM} by Bayart and Matheron, \cite{GM} by Grosse-Erdmann and Manguillot, and articles by Godefroy and Shapiro \cite{GS2}. The papers \cite{LZ1}-\cite{LZ3} investigate other aspects of the hypercyclic property.

This paper is inspired by the result \cite[Theorem 1.8]{GM}: "\textbf{No linear  fractional composition operator is \emph{hypercyclic} on the  Dirichlet space $\mathcal{D}$}."  We refer the readers to the paper \cite{WC}, which contains the proof. Now  in this paper, we want to characterize the hypercyclicity of composition operator $C_\varphi$ acting on  $\mathcal{B}_0$ and $B_p$ $(1\leq p<\infty)$. We will show that "\textbf{No  linear  fractional composition operator is \emph{hypercyclic} on the little Bloch space $\mathcal{B}_0$, $B_1$ and $B_p\;(2\leq p<\infty)$}."   Since the Besov space $B_2$ is the  Dirichlet space $\mathcal{D}$, hence we generalize  the above result to some extend. The paper is organized as follows: some lemmas are listed in section 2 and  the main results are given in section 3 and section 4.

Throughout the remainder of this paper, $C$ will denote a positive constant, the exact value of which will vary from one appearance to the next.

\section{auxiliary results}
A linear fractional transformation is a mapping of the form
 $$
\varphi \left( z \right) = \frac{{az + b}}{{cz + d}},
$$
where $ad - bc \ne 0$. We will write $LFT(\mathbb{D})$ to refer to the set of all such maps, which are self-maps of the unit disk $\mathbb{D}$. Those maps that take  $\mathbb{D}$ onto itself are precisely the members of  $Aut(\mathbb{D}),$ so that $Aut(\mathbb{D})\subset LFT(\mathbb{D})\subset S(\mathbb{D}).$

We classify those maps according to their fixed point behaviour, see \cite[p. 5]{Sha}:

(a) \emph{Parabolic} members of $LFT(\mathbb{D})$  have their   fixed point on $\partial \mathbb{D}$.

(b)\emph{ Hyperbolic} members of $LFT(\mathbb{D})$ must have an attractive fixed point in $\overline{\mathbb{D}}$, with the other fixed point outside $\mathbb{D}$, and lying on $\partial \mathbb{D}$ if and only if the map is an automorphism of $\mathbb{D}$.

(c)  \emph{Loxodromic} and \emph{elliptic} members of $\varphi \in LFT(\mathbb{D})$ have a fixed point in $\mathbb{D}$ and a fixed point outside $\overline{\mathbb{D}}$. The elliptic ones are precisely the automorphisms in $LFT(\mathbb{D})$ with this fixed point configuration.

The following two lemmas are well-known, so we omit the details.
\begin{lem}\cite[p. 82 (3.5)]{Zhu} For each $f\in \mathcal{B}$, we have
\begin{eqnarray*} |f(z)|\leq \|f\|_{Bloch} \log\frac{2}{1-|z|^2}. \end{eqnarray*}
\end{lem}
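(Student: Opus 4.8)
The plan is to control $f$ pointwise by integrating its derivative along the radial segment from the origin to $z$ and feeding in the defining bound $(1-|w|^2)|f'(w)|\le\|f\|_{\mathcal B}$ for a Bloch function. First I would parametrize the segment by $\gamma(t)=tz$ for $t\in[0,1]$ and use the fundamental theorem of calculus to write $f(z)-f(0)=\int_0^1 f'(tz)\,z\,dt$, so that passing the modulus inside the integral gives
$$|f(z)|\le|f(0)|+|z|\int_0^1|f'(tz)|\,dt.$$

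Next I would insert the Bloch estimate $|f'(tz)|\le\|f\|_{\mathcal B}/(1-t^2|z|^2)$, which is immediate since $|tz|=t|z|$. To keep the resulting one–dimensional integral entirely elementary I would use the convenient factorization $1-t^2|z|^2=(1-t|z|)(1+t|z|)\ge 1-t|z|$, which reduces the integral to $|z|\int_0^1 dt/(1-t|z|)=\log\frac{1}{1-|z|}$. This already produces the clean growth estimate
$$|f(z)|\le|f(0)|+\|f\|_{\mathcal B}\log\frac{1}{1-|z|}.$$

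Finally I would compare the factor $\log\frac{1}{1-|z|}$ with the asserted factor $\log\frac{2}{1-|z|^2}$. Writing $1-|z|^2=(1-|z|)(1+|z|)$ and using $1+|z|\le 2$ gives $\frac{1}{1-|z|}\le\frac{2}{1-|z|^2}$, hence $\log\frac{1}{1-|z|}\le\log\frac{2}{1-|z|^2}$; combined with the trivial bounds $\|f\|_{\mathcal B}\le\|f\|_{Bloch}$ and $|f(0)|\le\|f\|_{Bloch}$, one collapses the two summands into the single stated bound $|f(z)|\le\|f\|_{Bloch}\log\frac{2}{1-|z|^2}$. The radial–integration core is routine; the only genuinely delicate point is the bookkeeping in this last step, namely absorbing the additive constant $|f(0)|$ into the logarithmic factor. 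Here one uses that $\log\frac{2}{1-|z|^2}\ge 1$ once $|z|$ is bounded away from the origin, so that the comparison leaves slack $|f(0)|\bigl(\log\frac{2}{1-|z|^2}-1\bigr)+\|f\|_{\mathcal B}\log\frac{2}{1+|z|}$ to spare; this nonnegative-slack computation, rather than the growth estimate itself, is what I would expect to require the most care.
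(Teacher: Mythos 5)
The paper itself offers no proof of this lemma --- it is quoted from Zhu's book with the remark that the lemma is ``well-known, so we omit the details'' --- so your argument has to stand on its own. Its core is fine: the radial integration $f(z)-f(0)=\int_0^1 f'(tz)\,z\,dt$, the Bloch bound $|f'(tz)|\le \|f\|_{\mathcal B}/(1-t^2|z|^2)$, and your elementary estimates correctly give
\[
|f(z)|\le |f(0)|+\|f\|_{\mathcal B}\log\frac{1}{1-|z|}\le |f(0)|+\|f\|_{\mathcal B}\log\frac{2}{1-|z|^2}.
\]

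The gap is precisely the step you yourself flagged as delicate, and it cannot be repaired. Collapsing $|f(0)|+\|f\|_{\mathcal B}\log\frac{2}{1-|z|^2}$ into $\bigl(|f(0)|+\|f\|_{\mathcal B}\bigr)\log\frac{2}{1-|z|^2}$ requires $\log\frac{2}{1-|z|^2}\ge 1$, which holds only when $|z|^2\ge 1-2/e$, i.e. $|z|\ge\sqrt{1-2/e}\approx 0.514$; on the disk $|z|<\sqrt{1-2/e}$ the first term of your ``slack'' $|f(0)|\bigl(\log\frac{2}{1-|z|^2}-1\bigr)+\|f\|_{\mathcal B}\log\frac{2}{1+|z|}$ is negative, and ``once $|z|$ is bounded away from the origin'' excludes exactly the region where the problem sits. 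In fact the inequality as printed (with the paper's norm $\|f\|_{Bloch}=|f(0)|+\|f\|_{\mathcal B}$) is false there: for $f\equiv 1$ one has $\|f\|_{Bloch}=1$, while the claimed bound at $z=0$ reads $1\le\log 2$. What is true --- and all the paper ever uses, in Remark 2.3 and Lemma 2.6, where a constant is harmless --- is the estimate with a multiplicative constant, in the same form as Lemma 2.2: since $\log\frac{2}{1-|z|^2}\ge\log 2$ for every $z\in\mathbb{D}$, your penultimate display already yields
\[
|f(z)|\le \frac{1}{\log 2}\,\|f\|_{Bloch}\log\frac{2}{1-|z|^2}.
\]
So your computation does prove the correct, constant-bearing version of the lemma; the constant-free statement you set out to prove cannot be proved, because it fails for constant functions near the origin.
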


\begin{lem} \cite[Theorem 8]{Zhu1} For every $f\in B_p$ with $ 1< p<\infty$, we have
\begin{eqnarray*} |f(z)|\leq C \|f\|_{B_p}\left(\log \frac{2}{1-|z|^2}\right)^{1-1/p}. \end{eqnarray*}
\end{lem}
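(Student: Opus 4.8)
The plan is to prove this growth estimate by integrating $f'$ along the radius to $z$ and splitting that integral over the dyadic annuli $A_k=\{w:1-2^{-k}\le |w|<1-2^{-k-1}\}$, $k=0,1,2,\dots$. Writing $z=|z|e^{i\theta}$, the fundamental theorem of calculus gives
\[
|f(z)-f(0)|\le \int_0^{|z|}|f'(\rho e^{i\theta})|\,d\rho=\sum_{k=0}^{N}\int_{A_k\cap[0,|z|]}|f'(\rho e^{i\theta})|\,d\rho,
\]
where $N\approx\log_2\frac{1}{1-|z|}$ is the number of annuli meeting the segment. The whole point is to bound the radial integral over each $A_k$ by a local $L^p$ energy of $f'$ that is summable in $k$, and then to sum in $k$ by Hölder's inequality; the exponent $1-1/p$ on the logarithm should fall out of exactly that last step.

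First I would record the pointwise estimate coming from subharmonicity of $|f'|^p$ (valid since $p>1$). For $w\in A_k$ the Euclidean disk $D\big(w,\delta(1-|w|)\big)$ lies, for a fixed small $\delta$, in a slightly enlarged annulus $A_k^{*}$ on which $1-|u|^2\approx 2^{-k}$. The sub-mean value inequality then yields
\[
|f'(w)|^p\le \frac{C}{(1-|w|)^2}\int_{D(w,\delta(1-|w|))}|f'(u)|^p\,dA(u)\le C\,2^{kp}E_k,\qquad E_k:=\int_{A_k^{*}}|f'(u)|^p(1-|u|^2)^{p-2}\,dA(u),
\]
where I have absorbed the weight $(1-|u|^2)^{p-2}\approx 2^{-k(p-2)}$ on $A_k^{*}$. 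Hence $|f'(w)|\le C\,2^{k}E_k^{1/p}$ uniformly in the angle, and since the radial slice of $A_k$ has length $\approx 2^{-k}$, its contribution to the integral is at most $C\,E_k^{1/p}$.

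The key step is then Hölder's inequality applied to the sum over $k$, with exponents $p$ and $q=p/(p-1)$:
\[
|f(z)-f(0)|\le C\sum_{k=0}^{N}E_k^{1/p}\le C\Big(\sum_{k=0}^{N}E_k\Big)^{1/p}(N+1)^{1/q}\le C\,\|f\|_{B_p}\,(N+1)^{1-1/p}.
\]
Here I use that the enlarged annuli $A_k^{*}$ have bounded overlap, so $\sum_k E_k\le C\|f\|_{B_p}^p$, and that $1/q=1-1/p$. Since $N+1\approx \log\frac{2}{1-|z|^2}$, this is the desired bound for $|f(z)-f(0)|$; adding $|f(0)|\le C\|f\|_{p}$ and using $\log\frac{2}{1-|z|^2}\ge\log 2$ to absorb the constant term then finishes the estimate (reading the right-hand side norm as the full $B_p$ norm, since a nonzero constant has vanishing seminorm).

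The main obstacle is obtaining the sharp power $1-1/p$ rather than $1$. The naive route --- using the crude Bloch bound $|f'(w)|\le C\|f\|_{B_p}/(1-|w|^2)$, which follows from the same subharmonicity argument applied to the global norm --- gives $C\|f\|_{B_p}$ per annulus and hence only the factor $N\approx\log\frac{2}{1-|z|^2}$ to the first power. The improvement rests on replacing that uniform bound by the local energies $E_k$, whose $\ell^1$-summability (equivalently, the genuine two-dimensional $L^p$-integrability encoded in $\|f\|_{B_p}$) is converted by Hölder into the gain of $1/p$ in the exponent. Care is otherwise needed only in the routine geometric comparisons ($1-|u|^2\approx 2^{-k}$ on $A_k^{*}$, bounded overlap of the enlarged annuli, and the inclusion $D\big(w,\delta(1-|w|)\big)\subset A_k^{*}$), which I would arrange by fixing $\delta$ small once and for all.
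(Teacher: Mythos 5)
Your argument is correct, but note that there is nothing in the paper to compare it against: the paper states this lemma as a citation to Zhu's \emph{Analytic Besov spaces} (Theorem 8) and explicitly omits the proof, so any proof you give is necessarily an independent route. Your route is a sound real-variable one: the sub-mean value property of $|f'|^p$ (which in fact holds for every $p>0$ since $f'$ is holomorphic --- the hypothesis $p>1$ is needed only to have a finite conjugate exponent later) gives $|f'(w)|\le C\,2^k E_k^{1/p}$ on $A_k$; the radial slices of length $\approx 2^{-k}$ turn this into a bound $C\sum_k E_k^{1/p}$; bounded overlap of the enlarged annuli gives $\sum_k E_k\le C\|f\|_{B_p}^p$; and H\"older with exponents $\bigl(p,\,p/(p-1)\bigr)$ produces exactly the factor $(N+1)^{1-1/p}\approx\bigl(\log\tfrac{2}{1-|z|^2}\bigr)^{1-1/p}$. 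This is more elementary and self-contained than the argument in the cited reference, which proceeds through function-theoretic machinery (integral representations of Besov functions and kernel estimates) rather than a dyadic decomposition. You were also right to flag the normalization issue, which the paper glosses over: as literally stated, with $\|f\|_{B_p}$ the M\"obius-invariant \emph{seminorm}, the inequality is false (any nonzero constant function is a counterexample, since its seminorm vanishes); the estimate must be read either with $|f(z)-f(0)|$ on the left or with the full norm $\|f\|_p$ on the right. Your reading via the full norm is the one the paper actually relies on in Remark 2.3, where convergence in $\|\cdot\|_p$ is claimed to imply pointwise convergence, so your proof supports every subsequent use of the lemma.
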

\begin{rem} From Lemma 2.1 and Lemma 2.2, we obtain that the norm convergence in $\mathcal{B}_0$ (respectively, $B_p\;(1<p<\infty)$) implies pointwise convergence. \end{rem}
\begin{lem} Let $\varphi\in S(\mathbb{D})$  with an interior fixed point on $\mathbb{D}$. Suppose that $C_\varphi$  is bounded on $\mathcal{B}_0$ (respectively, $B_p\;(1< p<\infty)$). Then the operator $C_\varphi$ is not hypercyclic on $\mathcal{B}_0$  (respectively, $B_p\;(1< p<\infty)$). \end{lem}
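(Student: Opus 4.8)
The plan is to use the interior fixed point to manufacture a continuous linear functional that stays constant along every orbit, so that each orbit is confined to a proper closed subset of the space and hence cannot be dense. Write $X$ for the space under consideration ($\mathcal{B}_0$ or $B_p$, $1<p<\infty$), and let $w\in\mathbb{D}$ denote the interior fixed point, so $\varphi(w)=w$. First I would note that because $w$ is fixed by $\varphi$, it is fixed by every $n$-fold composition of $\varphi$ with itself; therefore, for every $f\in X$ and every $n\geq 0$,
\begin{eqnarray*}(C_\varphi^n f)(w)=f\big(\underbrace{\varphi\circ\cdots\circ\varphi}_{n}(w)\big)=f(w).\end{eqnarray*}
Thus the value of a function at $w$ is invariant under the action of $C_\varphi$.

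The second step is to record that the point-evaluation functional $\delta_w\colon g\mapsto g(w)$ is a nonzero bounded linear functional on $X$. Its boundedness follows from Lemma 2.1 when $X=\mathcal{B}_0$ and from Lemma 2.2 when $X=B_p$ (equivalently, from Remark 2.3, since norm convergence implies pointwise convergence): there is a constant $C=C(w)$ with $|g(w)|\leq C\,\|g\|$ for all $g\in X$. It is nonzero because it maps the constant function $1$ to $1$. Combining this with the first step shows that the whole orbit lies in a single level set of $\delta_w$:
\begin{eqnarray*}Orb(C_\varphi,f)\subseteq\{g\in X:\;\delta_w(g)=f(w)\}.\end{eqnarray*}

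Finally, since $\delta_w$ is a nonzero continuous linear functional, the level set $\{g\in X:\delta_w(g)=f(w)\}$ is a closed affine hyperplane, and in particular a proper closed subset of $X$ with empty interior; it therefore cannot be dense in $X$. Consequently $Orb(C_\varphi,f)$ fails to be dense for every $f\in X$, so $C_\varphi$ has no hypercyclic vector and is not hypercyclic. I expect the argument to present no serious obstacle: the one point that must be handled with care is the continuity of $\delta_w$, and this is precisely what the norm-to-pointwise estimates of Lemmas 2.1 and 2.2 supply.
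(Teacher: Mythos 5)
Your proposal is correct and is essentially the paper's own argument: the paper likewise uses that $(C_\varphi^{n}f)(a)=f(a)$ at the fixed point $a$ together with Remark 2.3 (norm convergence implies pointwise convergence, via Lemmas 2.1 and 2.2) to conclude that any norm limit $g$ of orbit elements must satisfy $g(a)=f(a)$, which fails for some $g$. Your repackaging of this as the orbit being trapped in the closed affine hyperplane $\{g:\ \delta_a(g)=f(a)\}$ is just a more functional-analytic phrasing of the same contradiction.
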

\begin{proof} We prove   for the little Bloch space $\mathcal{B}_0$. Let $a\in \mathbb{D}$ be the fixed point of $\varphi.$ Suppose  that $f\in\mathcal{B}_0$ is hypercyclic for $C_\varphi$   and  for each $g\in \mathcal{B}_0$ there exists a sequence $\{n_k\}$ such that $C_{\varphi}^{n_k} f$ tends to $g$ in $\mathcal{B}_0,$ that is, $$\|C_\varphi^{n_k} f-g\|_{Bloch}\rightarrow 0 \;\;\mbox{as}\;\;k\rightarrow\infty.$$ By \emph{Remark 2.3} and $f(\varphi^{n_k}(a))=f(a)$ for every $k\in \mathbb{N},$  it follows that
\begin{eqnarray*} g(a)=\lim\limits_{k\rightarrow\infty} (C_{\varphi}^{n_k}f)(a)=\lim\limits_{k\rightarrow\infty} (C_{\varphi^{n_k}} f)(a)=\lim\limits_{k\rightarrow\infty} f(\varphi^{n_k} (a))=f(a), \end{eqnarray*}  that is not the case for every $g\in \mathcal{B}_0$. Thus  the operator $C_\varphi$ is not hypercyclic on $\mathcal{B}_0$. The proof for the Besov spaces $B_p$ is similar, so we omit the details. This ends the proof.  \end{proof}

\begin{rem} In the following, we need only consider $\varphi$ is \emph{parabolic} or \emph{hyperbolic} case. \end{rem}

The following  lemma is a necessary condition  of the hypercyclic composition operator $C_\varphi$ on $\mathcal{B}_0$ and $B_p.$
\begin{lem}  Suppose that $\varphi\in S(\mathbb{D})$ and the bounded composition operator $C_\varphi$ is hypercyclic on $\mathcal{B}_0$ (respectively, $B_p\;(1< p<\infty)), $ then the compositional symbol $\varphi$ is univalent.
\end{lem}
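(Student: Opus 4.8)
The plan is to prove univalence of $\varphi$ by contradiction, exploiting the fact that hypercyclic operators must have dense range (indeed dense orbit). Suppose $\varphi$ is not univalent, so there exist two distinct points $z_1, z_2 \in \mathbb{D}$ with $\varphi(z_1) = \varphi(z_2)$. The key observation is that for \emph{any} iterate we have $\varphi^n(z_1) = \varphi^n(z_2)$ as well, since $\varphi^n(z_1) = \varphi^{n-1}(\varphi(z_1)) = \varphi^{n-1}(\varphi(z_2)) = \varphi^n(z_2)$. Consequently, for every $f$ in the space,
\begin{eqnarray*}
(C_\varphi^n f)(z_1) = f(\varphi^n(z_1)) = f(\varphi^n(z_2)) = (C_\varphi^n f)(z_2).
\end{eqnarray*}
So every element of the orbit $Orb(C_\varphi, f)$ takes equal values at $z_1$ and $z_2$.

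Next I would pass this coincidence to the closure using the continuity of point evaluations, which is exactly what \emph{Remark 2.3} supplies: norm convergence in $\mathcal{B}_0$ (respectively $B_p$) implies pointwise convergence, via Lemma 2.1 (respectively Lemma 2.2). Thus if $f$ is a hypercyclic vector and $g$ is an arbitrary target approximated along a subsequence $C_\varphi^{n_k} f \to g$ in norm, then evaluating at $z_1$ and $z_2$ gives $g(z_1) = \lim_k (C_\varphi^{n_k} f)(z_1) = \lim_k (C_\varphi^{n_k} f)(z_2) = g(z_2)$. Hence every $g$ in the space would satisfy $g(z_1) = g(z_2)$.

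The contradiction is then immediate: the functionals $\delta_{z_1} - \delta_{z_2}$ does not vanish on the whole space, since $z_1 \neq z_2$ and the space separates points. Concretely, one exhibits a single function $g$ with $g(z_1) \neq g(z_2)$ lying in the space. Any polynomial separating the two points works; for instance $g(z) = z$ gives $g(z_1) = z_1 \neq z_2 = g(z_2)$, and $g \in \mathcal{B}_0 \cap B_p$ since its derivative is bounded and compactly controlled. This single $g$ cannot be approximated by the orbit, contradicting hypercyclicity (which demands density, so in particular $g$ must be a limit point of the orbit). Therefore $\varphi$ must be univalent.

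I expect no serious obstacle here; the argument is structurally identical to the one given for Lemma 2.4, merely replacing the fixed-point relation $f(\varphi^{n_k}(a)) = f(a)$ by the coincidence relation $(C_\varphi^{n_k} f)(z_1) = (C_\varphi^{n_k} f)(z_2)$. The only point requiring a line of care is confirming that the separating function $g$ genuinely lies in both $\mathcal{B}_0$ and each $B_p$, which is trivial for a linear polynomial. As with Lemma 2.4, the little Bloch case and the Besov case run in parallel because both rely only on the pointwise-convergence consequence recorded in \emph{Remark 2.3}, so I would write out the $\mathcal{B}_0$ case in full and remark that the $B_p$ argument is identical.
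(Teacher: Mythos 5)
Your proposal is correct and takes essentially the same route as the paper's own proof: both propagate the coincidence $\varphi(z_1)=\varphi(z_2)$ through the iterates to get $(C_\varphi^n f)(z_1)=(C_\varphi^n f)(z_2)$, then use the pointwise-convergence consequence of Lemma 2.1/Lemma 2.2 (Remark 2.3) against a function $g$ separating $z_1$ and $z_2$. The only differences are cosmetic --- the paper runs an explicit $\epsilon/4$ triangle-inequality estimate instead of passing to the limit, and your phrase ``every element of the orbit'' should exclude the $n=0$ term (harmless, since density survives discarding one orbit point).
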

\begin{proof}We   prove  for the space $\mathcal{B}_0.$ Suppose that $\varphi(z_1)=\varphi(z_2)$ for some $z_1,z_2\in \mathbb{D}$ with $z_1\neq z_2.$ We pick $g\in \mathcal{B}_0$ such that $g(z_1)\neq g(z_2),$ and let $f\in \mathcal{B}_0$ be a hypercyclic vector for $C_\varphi.$  By Lemma 2.1, for each $n\in \mathbb{N},$
\begin{eqnarray*} |C_{\varphi}^nf -g|(z)\leq \|C_\varphi^n f-g\|_{Bloch} \log\frac{2}{1-|z|^2}.\end{eqnarray*} So for   $\epsilon:=|g(z_1)-g(z_2)|>0,$  we choose $n\in \mathbb{N}$ be  large enough so that \begin{eqnarray} |C_{\varphi}^n f -g|(z)<\epsilon/4\;\;\mbox{for}\;\;z=z_1,\;z_2.\label{2.1}\end{eqnarray}
On the other hand, since $\varphi(z_1)=\varphi(z_2)$, it follows that  \begin{eqnarray*}
\epsilon&=&|g(z_1)-g(z_2)|\leq |g(z_1)-C_\varphi^n(f)(z_1)|+|C_\varphi^n (f) (z_1)-g(z_2)|
\\&=&|g(z_1)-C_\varphi^n(f)(z_1)|+|f (\varphi^n (z_1))-g(z_2)|\\&=& |g(z_1)-C_\varphi^n(f)(z_1)|+|f (\varphi^n (z_2))-g(z_2)|
\\&=& |g(z_1)-C_\varphi^n(f)(z_1)|+|C_\varphi^n(f)(z_2)-g(z_2)| \\&<&\frac{\epsilon}{4}+\frac{\epsilon}{4}=\frac{\epsilon}{2}, \end{eqnarray*} we get a contraction. So $\varphi$ must be univalent. The proof for the $B_p\;(1< p<\infty)$ is similar.  This completes the proof.\end{proof}
\begin{lem}\cite[Schwarz-Pick Lemma]{MSZ}   For $\varphi\in S(\mathbb{D}),$ we have $$\frac{1-|z|^2}{1-|\varphi(z)|^2}|\varphi'(z)|\leq 1$$ for all $z\in \mathbb{D}.$
\end{lem}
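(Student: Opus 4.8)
The plan is to reduce the statement to the classical Schwarz Lemma by pre- and post-composing $\varphi$ with suitable members of $\mathcal{M}$ so that a chosen point and its image are both normalized to the origin. Fix $z_0\in\mathbb{D}$ and set $w_0=\varphi(z_0)$. Using the basic automorphisms $\varphi_{z_0}$ and $\varphi_{w_0}$ from the introduction, each of which is an involution swapping $0$ with its subscript, I would define
$$\psi=\varphi_{w_0}\circ\varphi\circ\varphi_{z_0}.$$
Since $\varphi_{z_0}(0)=z_0$ and $\varphi_{w_0}(w_0)=0$, this $\psi$ is a holomorphic self-map of $\mathbb{D}$ fixing the origin: $\psi(0)=\varphi_{w_0}(\varphi(z_0))=\varphi_{w_0}(w_0)=0$. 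The classical Schwarz Lemma then yields $|\psi'(0)|\leq 1$.

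Next I would compute $\psi'(0)$ by the chain rule,
$$\psi'(0)=\varphi_{w_0}'(w_0)\,\varphi'(z_0)\,\varphi_{z_0}'(0),$$
and evaluate the two automorphism derivatives explicitly. Differentiating $\varphi_a(z)=(a-z)/(1-\bar{a}z)$ gives $\varphi_a'(z)=-(1-|a|^2)/(1-\bar{a}z)^2$, whence $\varphi_{z_0}'(0)=-(1-|z_0|^2)$ and $\varphi_{w_0}'(w_0)=-1/(1-|w_0|^2)$. Substituting these and taking absolute values gives
$$|\psi'(0)|=\frac{(1-|z_0|^2)\,|\varphi'(z_0)|}{1-|\varphi(z_0)|^2}.$$
Combining this with the Schwarz bound $|\psi'(0)|\leq 1$ and letting $z_0$ range over $\mathbb{D}$ produces the asserted inequality for all $z\in\mathbb{D}$.

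The argument presents no genuine obstacle: the sole conceptual input is the Schwarz Lemma, and everything else is the routine derivative bookkeeping for the automorphisms, which is essentially already recorded in identity (\ref{1.1}). The only points deserving a little care are keeping the composition order correct when verifying $\psi(0)=0$, and respecting the involutive convention for $\varphi_a$, so that $\varphi_{z_0}'$ is evaluated at $0$ while $\varphi_{w_0}'$ is evaluated at $w_0$ rather than at $0$.
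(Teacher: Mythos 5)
Your proposal is correct and complete: $\psi=\varphi_{w_0}\circ\varphi\circ\varphi_{z_0}$ is a holomorphic self-map of $\mathbb{D}$ fixing the origin, the classical Schwarz Lemma gives $|\psi'(0)|\leq 1$, and your derivative evaluations $\varphi_{z_0}'(0)=-(1-|z_0|^2)$ and $\varphi_{w_0}'(w_0)=-1/(1-|w_0|^2)$ are exactly right, yielding the stated inequality. The paper gives no proof of this lemma at all --- it simply cites the classical Schwarz--Pick Lemma from \cite{MSZ} --- and your argument is precisely the standard conjugation-to-Schwarz proof that this citation stands for.
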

\begin{lem} \cite[Theorem 6.7]{G2} Let $T$ be an operator on a complex Fr\'{e}chet space $X$. If $x\in X$ is such that $\{\lambda T^n x,\;\lambda\in \mathbb{C},\;|\lambda|=1,\;\mbox{and}\;n\in \mathbb{N}_0\}$ is dense in $X$, then $orb(x,\lambda T)$ is dense in $X$ for each $\lambda\in \mathbb{C}$ with $|\lambda|=1.$

In particular, for any $\lambda\in \mathbb{C}$ with $|\lambda|=1,$ $T$ and $\lambda T$ have the same hypercyclic vectors, that is, \begin{eqnarray}HC(T)=HC(\lambda T).\label{2.2}\end{eqnarray} \end{lem}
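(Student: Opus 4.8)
The plan is to deduce the displayed equality $HC(T)=HC(\lambda T)$ from the first assertion and then to prove that first assertion by a Baire-category argument on the unit circle $\mathbb{T}=\{\lambda\in\mathbb{C}:|\lambda|=1\}$. Granting the first assertion: if $x\in HC(T)$, then $\{T^nx:n\in\mathbb{N}_0\}$ is dense, so a fortiori $\{\mu T^nx:\mu\in\mathbb{T},\,n\in\mathbb{N}_0\}$ is dense, and the first assertion yields that $\mathrm{orb}(x,\lambda T)$ is dense, i.e.\ $x\in HC(\lambda T)$; thus $HC(T)\subseteq HC(\lambda T)$. Since $\{\mu(\lambda T)^nx\}=\{\mu' T^nx\}$ is the same set and $\bar\lambda(\lambda T)=T$, applying the first assertion to the operator $\lambda T$ and the scalar $\bar\lambda$ gives the reverse inclusion, hence the equality.

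For the first assertion, note that $X$ is separable, since the dense set $\{\mu T^nx\}$ is a continuous image of the separable space $\mathbb{T}\times\mathbb{N}_0$. Fix a countable base $\{V_k\}$ of nonempty open subsets of $X$ and put
\[
P=\{\lambda\in\mathbb{T}:\mathrm{orb}(x,\lambda T)\text{ is dense}\}=\bigcap_{k}\ \bigcup_{n\ge 0}\{\lambda\in\mathbb{T}:\lambda^n T^nx\in V_k\}.
\]
As $\lambda\mapsto\lambda^n T^nx$ is continuous, each inner set is open, so $P$ is a $G_\delta$ set; the goal is $P=\mathbb{T}$. I would first show that $P$ is dense. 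The key elementary observation is that for every $N$ the tail $A_N=\{\mu T^nx:\mu\in\mathbb{T},\,n\ge N\}$ is still dense: the discarded part $\bigcup_{n<N}\mathbb{T}\cdot T^nx$ lies in the finite-dimensional subspace $\operatorname{span}\{T^nx:n<N\}$, hence is nowhere dense in the infinite-dimensional $X$, so its removal cannot spoil density. Combining this with a roots-of-unity spacing argument gives density of $P$: given $V_k$ and an arc $I\subset\mathbb{T}$ of length $\ell$, choose $N>2\pi/\ell$ and, by density of $A_N$, a pair $(\mu,n)$ with $n\ge N$ and $\mu T^nx\in V_k$; the $n$ solutions of $\zeta^n=\mu$ are spaced $2\pi/n\le 2\pi/N<\ell$ apart, so one of them, $\lambda$, lies in $I$, and then $\lambda^n T^nx=\mu T^nx\in V_k$. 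Thus each $\bigcup_n\{\lambda:\lambda^n T^nx\in V_k\}$ is dense and open, and the Baire category theorem on $\mathbb{T}$ shows that $P$ is a dense $G_\delta$.

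The main obstacle is to upgrade this residual statement to $P=\mathbb{T}$. For a remaining $\lambda_0$ I would study the stabilizer $W=\{\mu\in\mathbb{T}:\mu O=O\}$ of the orbit closure $O=\overline{\mathrm{orb}(x,\lambda_0 T)}$; one checks that $W$ is a closed subsemigroup of the compact group $\mathbb{T}$, hence a closed subgroup, so $W$ is either finite or all of $\mathbb{T}$. If $W=\mathbb{T}$, then $O\supseteq\mathbb{T}\cdot\mathrm{orb}(x,\lambda_0 T)=\{\nu T^nx\}$, which is dense, forcing $O=X$ and $\lambda_0\in P$. The hard part is excluding the possibility that $W$ is a proper finite subgroup: this case is consistent with all the purely topological information gathered above, and ruling it out requires exploiting the linear structure of the orbit closure of a linear operator, which is the genuine content of the Le\'{o}n-Saavedra--M\"{u}ller theorem cited as \cite[Theorem 6.7]{G2}. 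I expect this finite-symmetry case, rather than the category or spacing steps, to be the crux; an alternative route to closing it is to first establish that $P$ is a subgroup of $\mathbb{T}$, after which a non-meager subgroup with the Baire property must be open, hence all of the connected group $\mathbb{T}$, by Pettis' theorem.
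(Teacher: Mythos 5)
Your attempt is honest about where it stops, and that stopping point is precisely the theorem. Note first that the paper contains no proof of this lemma at all: it is quoted from Grosse-Erdmann--Manguillot \cite[Theorem 6.7]{G2}, i.e.\ it is the rotation theorem of Le\'{o}n-Saavedra and M\"{u}ller, so there is no in-paper argument to compare against; the comparison has to be with the actual content of that cited theorem. What you do prove is correct: (i) the formal deduction of $HC(T)=HC(\lambda T)$ from the first assertion, including the reverse inclusion obtained by applying the assertion to the operator $\lambda T$ with the scalar $\bar{\lambda}$; and (ii) that the set $P=\{\lambda\in\mathbb{T}:\ \mathrm{orb}(x,\lambda T)\ \mbox{is dense}\}$ is a dense $G_\delta$ in $\mathbb{T}$ --- the tail-density observation combined with the roots-of-unity spacing argument is a clean way to see that each set $\bigcup_n\{\lambda:\lambda^nT^nx\in V_k\}$ is open and dense. (One repair is needed even there: you invoke infinite-dimensionality of $X$, which is not a hypothesis; you must first dispose of the finite-dimensional case, e.g.\ by noting that the hypothesis makes $x$ a supercyclic vector, that complex supercyclic operators exist only in dimension $0$ or $1$, and that dimension $1$ is excluded because $\{|a|^n|x|\}$ is never dense in $[0,\infty)$.)

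The genuine gap is the upgrade from ``residual'' to ``all'': the statement asserts $P=\mathbb{T}$, and a dense $G_\delta$ can omit the particular $\lambda_0$ you care about. Your stabilizer dichotomy (the stabilizer $W$ of $O=\overline{\mathrm{orb}(x,\lambda_0T)}$ is a closed subsemigroup of the compact group $\mathbb{T}$, hence a closed subgroup, hence finite or all of $\mathbb{T}$) is indeed the right opening move, and the case $W=\mathbb{T}$ is handled correctly; but in the case where $W$ is a proper finite subgroup you have no contradiction, and, as you concede, everything you established (residuality of $P$, the covering $X=\mathbb{T}\cdot O$) is consistent with that case. Your fallback via Pettis is circular: to show $P$ is closed under multiplication you would need that density of $\mathrm{orb}(x,\mu T)$ and of $\mathrm{orb}(x,\nu T)$ implies density of $\mathrm{orb}(x,\mu\nu T)$, which is an instance of the very theorem being proved (applied to the operator $\mu T$). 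Closing the finite-stabilizer case is the real content of Le\'{o}n-Saavedra--M\"{u}ller: one must exploit equivariance and connectedness, roughly by studying the coset-valued assignment $y\mapsto\{\mu\in\mathbb{T}:\mu y\in O\}$ in $\mathbb{T}/W$ and how $T$ and scalar rotation act on it, none of which appears in your proposal. So: the reduction and the category step are fine, but the proof of the first assertion --- the part that everything else hangs on --- is missing.
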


\section{Hypercyclicity on the little Bloch space $\mathcal{B}_0$ }

For $\varphi\in S(\mathbb{D})$ and by the Schwarz-Pick lemma (Lemma 2.7), we obtain that
\begin{eqnarray*}\|C_\varphi f\|_{Bloch}&=&|f(\varphi(0))|+\sup\limits_{z\in \mathbb{D}} (1-|z|^2)|f'(\varphi(z))\varphi'(z)|\nonumber\\&\leq & |f(\varphi(0))|+\sup\limits_{z\in \mathbb{D}} (1-|\varphi(z)|^2)|f'(\varphi(z))|\\&\leq& |f(\varphi(0))|+\|f\|_{\mathcal{B}}<\infty,
 \end{eqnarray*}
then the composition operator $C_\varphi$  is always a bounded operator from  $\mathcal{B}$ into $\mathcal{B}$. Moreover, if $\varphi\in \mathcal{B}_0,$ then $C_\varphi$ maps $\mathcal{B}_0$ into $\mathcal{B}_0.$ In this section, we always assume $C_\varphi$ is bounded on $\mathcal{B}_0.$\vspace{1mm}

 $\textbf{Case\;I.} $  $\varphi\in Aut(\mathbb{D}).$
\begin{thm}Suppose $\varphi\in  Aut(\mathbb{D})$ and the composition operator $C_\varphi$ is bounded on $\mathcal{B}_0.$ Then  $C_\varphi$ is not hypercyclic on $\mathcal{B}_0.$   \end{thm}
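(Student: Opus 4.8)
The plan is to exploit the M\"{o}bius-invariance of the Bloch seminorm, which (by the identity (\ref{1.3})) is \emph{preserved} under composition with any automorphism. The essential observation is that $C_\varphi$ acts isometrically with respect to the seminorm $\|\cdot\|_{\mathcal{B}}$, so the entire orbit of any function sits on a single ``sphere'' of that seminorm and therefore cannot be dense in $\mathcal{B}_0$. This argument handles all automorphisms at once, bypassing any case-by-case analysis of the parabolic/hyperbolic dynamics of $\varphi$.

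First I would note that since $\varphi\in Aut(\mathbb{D})$, each iterate $\varphi^{n}$ again lies in $Aut(\mathbb{D})=\mathcal{M}$ and $C_\varphi^{n}=C_{\varphi^{n}}$. Applying (\ref{1.3}) with $S=\varphi^{n}$ yields $\|C_\varphi^{n}f\|_{\mathcal{B}}=\|f\circ\varphi^{n}\|_{\mathcal{B}}=\|f\|_{\mathcal{B}}$ for every $f\in\mathcal{B}_0$ and every $n\in\mathbb{N}_0$; that is, the Bloch seminorm is constant along every orbit. Next, suppose toward a contradiction that some $f\in\mathcal{B}_0$ is hypercyclic for $C_\varphi$, so that $Orb(C_\varphi,f)$ is dense in $(\mathcal{B}_0,\|\cdot\|_{Bloch})$. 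Fix any $g\in\mathcal{B}_0$; then there is a subsequence with $\|C_\varphi^{n_k}f-g\|_{Bloch}\to 0$. Since $\|h\|_{\mathcal{B}}\le\|h\|_{Bloch}$, the reverse triangle inequality gives $\bigl|\,\|C_\varphi^{n_k}f\|_{\mathcal{B}}-\|g\|_{\mathcal{B}}\,\bigr|\le\|C_\varphi^{n_k}f-g\|_{Bloch}\to 0$, whence $\|g\|_{\mathcal{B}}=\lim_{k}\|C_\varphi^{n_k}f\|_{\mathcal{B}}=\|f\|_{\mathcal{B}}$.

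As $g$ was arbitrary, this would force \emph{every} function in $\mathcal{B}_0$ to have Bloch seminorm equal to $\|f\|_{\mathcal{B}}$, which is absurd: a hypercyclic vector cannot be constant (a singleton orbit is never dense), so $\|f\|_{\mathcal{B}}>0$, and yet $\mathcal{B}_0$ contains functions of every nonnegative seminorm value (for instance the scalar multiples $tf$, $t\ge 0$). Choosing $g$ with $\|g\|_{\mathcal{B}}\neq\|f\|_{\mathcal{B}}$ produces the contradiction, so no hypercyclic vector exists. I do not anticipate a serious obstacle here; the only points needing minor care are that the seminorm is continuous with respect to the full norm (immediate from $\|\cdot\|_{\mathcal{B}}\le\|\cdot\|_{Bloch}$) and that $\|\cdot\|_{\mathcal{B}}$ genuinely takes more than one value on $\mathcal{B}_0$, so that the constant-seminorm orbit cannot exhaust the space. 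Note in particular that Lemma 2.4 and Remark 2.5 are not even required for this case, since the seminorm-invariance argument applies uniformly to elliptic, parabolic, and hyperbolic automorphisms alike.
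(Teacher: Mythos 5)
Your proof is correct, and it rests on the same underlying mechanism as the paper's: by (\ref{1.3}) the Bloch seminorm is constant along the orbit of $C_\varphi$, any limit of a subsequence of the orbit must therefore have seminorm $\|f\|_{\mathcal{B}}$, and density fails because the seminorm is not constant on $\mathcal{B}_0$. The execution, however, is genuinely different and worth comparing. The paper reaches the conclusion indirectly: it writes $\|C_\varphi^{k}f\|_{Bloch}=|f(\varphi^{k}(0))|+\|f\|_{\mathcal{B}}$, invokes Remark 2.3 (and hence the growth estimate of Lemma 2.1) to control the point-evaluation term along the subsequence, and then tests against the two concrete targets $g=z$ and $g=z^{2}$, computing their Bloch norms explicitly to exhibit two incompatible values. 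You obtain the same conclusion in one line from the reverse triangle inequality, using only $\|h\|_{\mathcal{B}}\le\|h\|_{Bloch}$ and subadditivity of the seminorm; no pointwise-convergence lemma and no explicit norm computations are needed, since any two functions of distinct seminorm (say $0$ and $z$) finish the argument --- your detour through the non-constancy of $f$ and the multiples $tf$ is sound but even that is dispensable. Your route is thus more elementary and more portable: it transfers verbatim to Theorem 4.1 on $B_p$ via (\ref{1.2}), where it would eliminate the Beta-function computations, and it is indifferent to the paper's small numerical slip there and in (\ref{3.4}) (the supremum of $2r(1-r^{2})$ gives $\|z^{2}\|_{\mathcal{B}}=4\sqrt{3}/9$, not $2\sqrt{3}/9$; harmless for the paper, since only the fact that this value differs from $1$ matters, but your argument never has to know it).
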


\begin{proof}
Suppose  that there is an $f\in \mathcal{B}_0$ such that the set $\{C_{\varphi}^k f:\;k\in \mathbb{N}\cup\{0\}\}$ is dense in $\mathcal{B}_0$. By (\ref{1.3}) it follows that
\begin{eqnarray*} \|C_\varphi^k f\|_{Bloch}&=&|f(\varphi^k(0))|+\|f\circ \varphi^k\|_{\mathcal{B}}
\nonumber\\&=& |f(\varphi^k(0))|+\|f\|_{\mathcal{B}}. \end{eqnarray*}
For $f_1(z)=z\in \mathcal{B}_0,$ there exists a subsequence $\{\varphi^{k_j}\}_j$ such that
\begin{eqnarray} \|C_{\varphi}^{k_j} f-f_1\|_{Bloch}\rightarrow 0\;\;\mbox{as}\;\;j\rightarrow\infty.\label{3.1} \end{eqnarray}
From which and \emph{Remark} 2.3, it is clear that for $z=0,$
\begin{eqnarray} f(\varphi^{k_j}(0))\rightarrow 0\;\;\mbox{as}\;\; j\rightarrow \infty.\label{3.2}\end{eqnarray}
By (\ref{3.1}) and (\ref{3.2}), we have
\begin{eqnarray}\|f\|_{Bloch}&=&|f(0)|+\|f\|_{\mathcal{B}}\nonumber\\&=&|f(0)|+\|C_\varphi^{k_j} f\|_{Bloch}-|f(\varphi^{k_j}(0))|\nonumber\\&\rightarrow& |f(0)|+\|f_1\|_{Bloch},\;\;j\rightarrow\infty \nonumber\\&=&|f(0)|+1.\label{3.3} \end{eqnarray}
On the other hand,  there exists another sequence $\{\varphi^{\bar{k}_j}\}_j$ such that
\begin{eqnarray*} \|C_{\varphi}^{\bar{k}_j} f -f_1^2\|_{Bloch} \rightarrow 0\;\;\mbox{as}\;\;j\rightarrow \infty.\end{eqnarray*}
Similarly, $f\circ \varphi^{\bar{k}_j}(0)\rightarrow 0\;\;\mbox{as}\;\;j\rightarrow \infty.$  Besides
\begin{eqnarray} \|f\|_{Bloch}&=&|f(0)| +\|f\|_{\mathcal{B}}\nonumber\\&=& |f(0)| +\|C_{\varphi}^{\bar{k}_j} f\|_{Bloch}-|f\circ \varphi^{\bar{k}_j}(0)|\nonumber\\&\rightarrow&|f(0)| +\|f_1^2\|_{Bloch},\;\;j\rightarrow\infty\nonumber\\&=&|f(0)|+\frac{2\sqrt{3}}{9}.\label{3.4} \end{eqnarray}
Combining (\ref{3.3}) and (\ref{3.4}), we get a contraction. Therefore, the composition operator $C_\varphi$ is not hypercyclic on $\mathcal{B}_0.$ This completes the proof.
\end{proof}

$\textbf{Case\; II.}$ $\varphi \notin Aut (\mathbb{D}).$ For this case, we only consider  $\varphi$ with no interior fixed point in $\mathbb{D}$ by \emph{Remark} 2.5.
\begin{thm} Suppose $\varphi\notin Aut(\mathbb{D})$ and $\varphi$ has no interior fixed point in $\mathbb{D}$. Further assume that the composition operator $C_\varphi$ is bounded on $\mathcal{B}_0$, then   $C_\varphi$ is still not hypercyclic on $\mathcal{B}_0.$
\end{thm}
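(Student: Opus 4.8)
The plan is to exploit the fact that, for an \emph{arbitrary} $\varphi\in S(\mathbb{D})$, the Bloch seminorm $\|\cdot\|_{\mathcal{B}}$ can only decrease under $C_\varphi$. Indeed, carrying the boundedness estimate from the beginning of this section one step further, the Schwarz-Pick lemma (Lemma 2.7) gives, for every $h\in\mathcal{B}$,
\begin{eqnarray*}
\|C_\varphi h\|_{\mathcal{B}}=\sup_{z\in\mathbb{D}}(1-|z|^2)|h'(\varphi(z))||\varphi'(z)|\leq\sup_{z\in\mathbb{D}}(1-|\varphi(z)|^2)|h'(\varphi(z))|\leq\|h\|_{\mathcal{B}}.
\end{eqnarray*}
Applying this with $h=C_\varphi^{k}f$ shows that the sequence $a_k:=\|C_\varphi^{k}f\|_{\mathcal{B}}$ is non-increasing. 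Since $a_k\geq 0$ for each $k$, it converges to some limit $L:=\inf_k a_k\geq 0$. This is the one genuine difference from Case I: there the seminorm is exactly preserved by M\"{o}bius invariance, while here Schwarz-Pick yields only an inequality, but monotonicity of a bounded sequence is already enough for the argument.

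Next I would argue by contradiction. Suppose $f\in\mathcal{B}_0$ is a hypercyclic vector for $C_\varphi$. Fix any $g\in\mathcal{B}_0$; by density of the orbit there is a subsequence with $\|C_\varphi^{k_j}f-g\|_{Bloch}\to 0$. Because $\|\cdot\|_{\mathcal{B}}$ is a seminorm dominated by $\|\cdot\|_{Bloch}$, the reverse triangle inequality gives
\begin{eqnarray*}
\left|\,\|C_\varphi^{k_j}f\|_{\mathcal{B}}-\|g\|_{\mathcal{B}}\,\right|\leq\|C_\varphi^{k_j}f-g\|_{\mathcal{B}}\leq\|C_\varphi^{k_j}f-g\|_{Bloch}\to 0,
\end{eqnarray*}
so $\|C_\varphi^{k_j}f\|_{\mathcal{B}}\to\|g\|_{\mathcal{B}}$. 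But $a_{k_j}\to L$ as well, whence $\|g\|_{\mathcal{B}}=L$. As $g$ was arbitrary, every element of $\mathcal{B}_0$ would have the same Bloch seminorm $L$.

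Finally I would obtain the contradiction by exhibiting two functions of different seminorm inside $\mathcal{B}_0$, for instance $g_1(z)=z$ with $\|g_1\|_{\mathcal{B}}=1$ and a constant $g_2$ with $\|g_2\|_{\mathcal{B}}=0$ (or $g_2(z)=2z$ with $\|g_2\|_{\mathcal{B}}=2$); both lie in $\mathcal{B}_0$, yet $\|g_1\|_{\mathcal{B}}\neq\|g_2\|_{\mathcal{B}}$, contradicting the conclusion that $\|\cdot\|_{\mathcal{B}}$ is constant on $\mathcal{B}_0$. Hence no hypercyclic vector can exist. I do not anticipate a serious obstacle in this case: the only points requiring care are checking that the Schwarz-Pick estimate contracts the seminorm and passing from norm convergence along the orbit to convergence of the seminorms, after which everything rests on the elementary fact that a non-increasing nonnegative sequence converges and therefore pins down every subsequential limit. (It is worth noting that this argument never actually invokes the hypotheses $\varphi\notin Aut(\mathbb{D})$ or "no interior fixed point"; these serve only to separate the present case from Case I and Lemma 2.4.)
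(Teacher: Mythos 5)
Your proof is correct, and it takes a genuinely different route from the paper's. Both arguments hinge on the same Schwarz--Pick computation (your contraction estimate is essentially the paper's display (3.6) with the point-evaluation term stripped away), but they extract the contradiction differently. The paper tests the orbit against the unbounded family $g_n(z)=nz$: for fixed $n$ it takes a subsequence with $\|C_{\varphi}^{k_j}f-nz\|_{Bloch}\to 0$, uses Remark 2.3 (hence the growth estimate of Lemma 2.1) to get $f(\varphi^{k_j}(0))\to 0$, and then Schwarz--Pick yields $\limsup_j\|C_{\varphi}^{k_j}f\|_{Bloch}\le\|f\|_{\mathcal{B}}$, which is incompatible with $\|C_{\varphi}^{k_j}f\|_{Bloch}\to\|nz\|_{Bloch}=n$ once $n>\|f\|_{\mathcal{B}}$. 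You instead observe that $C_\varphi$ contracts the Bloch seminorm, so $a_k=\|C_\varphi^k f\|_{\mathcal{B}}$ decreases to a limit $L$, and density of the orbit then forces every $g\in\mathcal{B}_0$ to satisfy $\|g\|_{\mathcal{B}}=L$, refuted by comparing $g(z)=z$ with a constant. Your version buys two real advantages: it avoids Remark 2.3 and pointwise convergence entirely, since working with the seminorm alone removes the $|f(\varphi^{k_j}(0))|$ term the paper must control; and, as you note, it never uses $\varphi\notin Aut(\mathbb{D})$ or the absence of an interior fixed point, so the same argument proves Theorem 3.1, Theorem 3.2, and the $\mathcal{B}_0$ part of Lemma 2.4 in one stroke --- no bounded composition operator whatsoever is hypercyclic on $\mathcal{B}_0$. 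One small point deserves care: to conclude $a_{k_j}\to L$ you need the approximating indices to satisfy $k_j\to\infty$, which is not automatic when the target $g$ happens to lie on the orbit itself. This is harmless --- either recall that a dense set minus finitely many points remains dense in a normed space, so the $k_j$ may always be chosen strictly increasing, or pick your two witness functions from the uncountable families $\{cz:1<c<2\}$ and the constants, which cannot all lie in the countable orbit --- but the sentence ``$a_{k_j}\to L$ as well'' silently uses it and should be justified.
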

\begin{proof}
Suppose that $C_\varphi$ is hypercyclic on $\mathcal{B}_0$ and $f\in \mathcal{B}_0$  is a hypercyclic vector for $€C_\varphi.$ Hence for every $g\in \mathcal{B}_0$, there exists   $\{\varphi^{k}\}_k$ satisfying
\begin{eqnarray*} \|C_{\varphi^{k}}f -g\|_{Bloch}\rightarrow 0\;\;\mbox{as}\;\;k\rightarrow \infty. \end{eqnarray*}
In particular, we choose $g(z)=nz$ for a fixed $n\in \mathbb{N},$  then there exists a subsequence $\{\varphi^{k_j}\}_j$ such that
\begin{eqnarray*} \|C_{\varphi^{k_j}} f-nz\|_{Bloch}\rightarrow0 \;\;\mbox{as}\;\;j\rightarrow \infty. \end{eqnarray*}
From which and \emph{Remark} 2.3, it follows that \begin{eqnarray}f(\varphi^{k_j}(0))\rightarrow 0\;\;\mbox{as}\;\;j\rightarrow\infty.\label{3.5}\end{eqnarray}
By Schwarz-Pick Lemma (Lemma 2.7) and (\ref{3.5}), we get
\begin{eqnarray}&&\|C_{\varphi^{k_j}} f\|_{Bloch}=|f(\varphi^{k_j}(0))|+\sup\limits_{z\in \mathbb{D}}(1-|z|^2)|(f\circ \varphi^{k_j})'(z)|\nonumber
\\&&= |f(\varphi^{k_j}(0))|+\sup\limits_{z\in \mathbb{D}}(1-|z|^2)|f'(\varphi^{k_j}(z)) \left(\varphi^{k_j}\right)'(z)|\nonumber\\&&=
|f(\varphi^{k_j}(0))|+\sup\limits_{z\in \mathbb{D}}\frac{1-|z|^2}{1-|\varphi^{k_j}(z)|^2}|\left(\varphi^{k_j}\right)'(z)| (1-|\varphi^{k_j}(z)|^2)|f'(\varphi^{k_j}(z))|\nonumber\\&&\leq|f(\varphi^{k_j}(0))| +\sup\limits_{z\in \mathbb{D}} (1-|\varphi^{k_j}(z)|^2)|f'(\varphi^{k_j}(z))|\nonumber\\&&\leq |f(\varphi^{k_j}(0))| +\|f\|_{\mathcal{B}}\nonumber\\&&\rightarrow \|f\|_{\mathcal{B}}=\|f\|_{Bloch}-|f(0)|<\infty,\;\;j\rightarrow \infty. \label{3.6} \end{eqnarray}
At the same time, since \begin{eqnarray*}\|nz\|_{\mathcal{B}}=\sup\limits_{z\in \mathbb{D}} (1-|z|^2)n=n\rightarrow \infty\;\;\mbox{as}\;\;n\rightarrow\infty.\end{eqnarray*}
Thus \begin{eqnarray} \|C_{\varphi ^{k_j}}f\|_{Bloch}\rightarrow\infty,\;\;j\rightarrow \infty.\label{3.7}\end{eqnarray}
Combining (\ref{3.6}) and (\ref{3.7}), we get a contraction. Thus  $C_\varphi$ is not hypercyclic on $\mathcal{B}_0.$ This completes the proof. \end{proof}
From Lemma 2.8, we obtain that $HC (C_\varphi)=HC (\lambda C_\varphi)$ for $|\lambda|=1,$ hence the following corollary holds.
\begin{cor} For every $\lambda\in \mathbb{C}$ with $|\lambda|=1$ and $\varphi\in S(\mathbb{D})$. Suppose that the composition operator $C_\varphi$ is bounded in $\mathcal{B}_0,$ then the operator  $\lambda C_\varphi$ is not hypercyclic on $\mathcal{B}_0$. \end{cor}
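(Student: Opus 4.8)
The plan is to recognize that this is a routine corollary: once we know the unrotated operator $C_\varphi$ is never hypercyclic on $\mathcal{B}_0$, the unimodular rotation $\lambda C_\varphi$ inherits this by the invariance of the hypercyclic-vector set recorded in Lemma 2.8. So the first task is simply to assemble the case analysis already carried out in this section into a single ``$C_\varphi$ is not hypercyclic'' statement, and the second is to apply Lemma 2.8.

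First I would argue that, for every $\varphi\in S(\mathbb{D})$ with $C_\varphi$ bounded on $\mathcal{B}_0$, the operator $C_\varphi$ is not hypercyclic on $\mathcal{B}_0$. This follows by exhausting the possibilities for $\varphi$ according to its fixed-point behaviour. If $\varphi$ has an interior fixed point in $\mathbb{D}$, then Lemma 2.4 gives the conclusion directly. If instead $\varphi\in Aut(\mathbb{D})$, then Theorem 3.1 applies (note this case needs no separate discussion of fixed points, since Theorem 3.1 is stated for all automorphisms). Finally, if $\varphi\notin Aut(\mathbb{D})$ and $\varphi$ has no interior fixed point in $\mathbb{D}$, then Theorem 3.2 applies. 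Since every $\varphi\in S(\mathbb{D})$ falls into at least one of these three cases, we conclude that $C_\varphi$ is never hypercyclic on $\mathcal{B}_0$; equivalently, its hypercyclic-vector set is empty, $HC(C_\varphi)=\emptyset$.

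Next I would invoke Lemma 2.8, which asserts $HC(C_\varphi)=HC(\lambda C_\varphi)$ for every $\lambda\in\mathbb{C}$ with $|\lambda|=1$. Combining this identity with $HC(C_\varphi)=\emptyset$ yields $HC(\lambda C_\varphi)=\emptyset$, which is precisely the statement that $\lambda C_\varphi$ is not hypercyclic on $\mathcal{B}_0$. This closes the argument.

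I expect no genuine analytic obstacle here, since all the substantive work resides in Theorems 3.1 and 3.2 and in Lemma 2.4; the only points warranting a moment's care are verifying that the three cases really cover all of $S(\mathbb{D})$ (so that the ``$C_\varphi$ not hypercyclic'' conclusion is unconditional within the bounded regime) and that Lemma 2.8, stated for operators on a complex Fr\'{e}chet space, legitimately applies to the Banach space $\mathcal{B}_0$.
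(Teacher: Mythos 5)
Your proposal is correct and follows essentially the same route as the paper: the paper's justification for this corollary is precisely the observation that Theorems 3.1, 3.2 and Lemma 2.4 together show $HC(C_\varphi)=\emptyset$ for every bounded $C_\varphi$ on $\mathcal{B}_0$, and then Lemma 2.8 gives $HC(\lambda C_\varphi)=HC(C_\varphi)=\emptyset$. Your added care about the exhaustiveness of the three fixed-point cases and the applicability of Lemma 2.8 (a Banach space being a Fr\'{e}chet space) only makes explicit what the paper leaves implicit.
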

Since the Besov space $B_1\subset \mathcal{B}_0$, we obtain the following corollary,
\begin{cor} For every $\lambda\in \mathbb{C}$ with $|\lambda|=1$ and $\varphi\in S(\mathbb{D})$. Suppose that the composition operator $C_\varphi$ is bounded in $B_1,$ then the operator  $\lambda C_\varphi$ is not hypercyclic on $B_1$. \end{cor}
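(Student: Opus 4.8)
The plan is to deduce the statement from Corollary 3.3 by \emph{transferring} hypercyclicity from the smaller space $B_1$ up to the larger space $\mathcal{B}_0$ through the continuous dense inclusion $B_1\hookrightarrow\mathcal{B}_0$. First I would observe that the hypothesis already forces $C_\varphi$ to be bounded on $\mathcal{B}_0$: applying the bounded operator $C_\varphi$ on $B_1$ to the test function $f_1(z)=z\in B_1$ gives $\varphi=C_\varphi f_1\in B_1\subset\mathcal{B}_0$, and since $\varphi\in\mathcal{B}_0$ the operator $C_\varphi$ maps $\mathcal{B}_0$ into $\mathcal{B}_0$ and is bounded there, being the restriction of the bounded operator $C_\varphi$ on $\mathcal{B}$ to the closed subspace $\mathcal{B}_0$. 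Consequently $\lambda C_\varphi$ is also bounded on $\mathcal{B}_0$ whenever $|\lambda|=1$, so Corollary 3.3 applies and tells us that $\lambda C_\varphi$ is \emph{not} hypercyclic on $\mathcal{B}_0$.

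Next I would argue by contradiction. Suppose $\lambda C_\varphi$ were hypercyclic on $B_1$ with hypercyclic vector $f$, so that the orbit $\{(\lambda C_\varphi)^n f:n\ge 0\}$ is dense in $B_1$. Two ingredients are needed: (i) the inclusion $j\colon B_1\to\mathcal{B}_0$ is continuous, i.e. $\|h\|_{Bloch}\le C\|h\|_{B_1}$ for all $h\in B_1$, which is the norm domination accompanying $B_1\subset\mathcal{B}_0$; and (ii) $B_1$ is dense in $\mathcal{B}_0$, which holds because the polynomials lie in $B_1$ and are dense in $\mathcal{B}_0$. Granting these, I claim any orbit dense in $B_1$ stays dense in $\mathcal{B}_0$: given $g\in\mathcal{B}_0$ and $\epsilon>0$, density of $B_1$ in $\mathcal{B}_0$ lets me pick $h\in B_1$ with $\|g-h\|_{Bloch}<\epsilon/2$, and density of the orbit in $B_1$ lets me pick $n$ with $\|(\lambda C_\varphi)^n f-h\|_{B_1}<\epsilon/(2C)$; then by (i) the triangle inequality yields $\|(\lambda C_\varphi)^n f-g\|_{Bloch}<\epsilon$. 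Since the orbit elements $(\lambda C_\varphi)^n f=\lambda^n f\circ\varphi^n$ are the same whether computed in $B_1$ or in $\mathcal{B}_0$, this exhibits $f$ as a hypercyclic vector for $\lambda C_\varphi$ on $\mathcal{B}_0$, contradicting Corollary 3.3. Hence $\lambda C_\varphi$ cannot be hypercyclic on $B_1$.

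The step I expect to be the main obstacle is precisely the transference in the middle paragraph, namely verifying that the inclusion $B_1\hookrightarrow\mathcal{B}_0$ is simultaneously continuous and dense, so that a set dense in $B_1$ remains dense in $\mathcal{B}_0$. The continuity is exactly the norm estimate underlying $B_1\subset\mathcal{B}_0$, and the density follows from the density of the polynomials in $\mathcal{B}_0$ together with $\text{(polynomials)}\subset B_1$; once both are in hand the approximation argument above is routine, and the boundedness reduction in the first paragraph is what makes Corollary 3.3 directly applicable.
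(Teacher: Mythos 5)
Your proof is correct and takes essentially the same route as the paper: the paper deduces this corollary from Corollary 3.3 in a single line, citing only the inclusion $B_1\subset\mathcal{B}_0$, which is exactly the continuous-and-dense-embedding transference argument you carry out. Your write-up simply makes explicit the details the paper leaves implicit (continuity and density of the inclusion $B_1\hookrightarrow\mathcal{B}_0$, and the observation that boundedness of $C_\varphi$ on $B_1$ gives $\varphi\in B_1\subset\mathcal{B}_0$ and hence boundedness of $C_\varphi$ on $\mathcal{B}_0$, so that Corollary 3.3 applies).
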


\section{Hypercyclicity on  $B_p\;$}
This section is similar to section 3, we include the brief proof for the convenience of the readers.\vspace{1mm}\\
$\textbf{Case\;I}.$  $\varphi\in Aut(\mathbb{D}).$
\begin{thm}Suppose $\varphi\in  Aut(\mathbb{D})$ and  the composition operator  $C_\varphi$ is bounded on $B_p$ ($1< p<\infty$), then $C_\varphi$ is not hypercyclic on $B_p.$   \end{thm}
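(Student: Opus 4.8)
The plan is to mirror the argument for $\mathcal{B}_0$ in Theorem 3.1, replacing the Bloch seminorm by the Besov seminorm and invoking its M\"obius-invariance (\ref{1.2}) in place of (\ref{1.3}). Suppose, for contradiction, that some $f\in B_p$ is a hypercyclic vector for $C_\varphi$. Since $\varphi\in Aut(\mathbb{D})$, every iterate $\varphi^k$ again lies in $Aut(\mathbb{D})$, so (\ref{1.2}) yields $\|f\circ\varphi^k\|_{B_p}=\|f\|_{B_p}$, and hence
\begin{eqnarray*}\|C_\varphi^k f\|_p^p=|f(\varphi^k(0))|^p+\|f\circ\varphi^k\|_{B_p}^p=|f(\varphi^k(0))|^p+\|f\|_{B_p}^p.\end{eqnarray*}
Thus every point of the orbit carries the \emph{same} Besov seminorm, and only the scalar evaluation term $|f(\varphi^k(0))|^p$ is allowed to vary along the orbit.

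Next I would play this rigidity against density. Choose two polynomials that vanish at the origin but have distinct Besov seminorms; following section 3 the natural choice is $f_1(z)=z$ and $f_1^2(z)=z^2$. By hypercyclicity there are subsequences with $\|C_\varphi^{k_j}f-f_1\|_p\to 0$ and $\|C_\varphi^{\bar k_j}f-f_1^2\|_p\to 0$. By \emph{Remark} 2.3 norm convergence forces pointwise convergence, so in particular $f(\varphi^{k_j}(0))\to f_1(0)=0$ and $f(\varphi^{\bar k_j}(0))\to f_1^2(0)=0$. Passing to the limit in the displayed identity and using continuity of $\|\cdot\|_p$, the evaluation terms drop out and I obtain
\begin{eqnarray*}\|f\|_{B_p}^p=\|f_1\|_{B_p}^p\quad\text{and}\quad\|f\|_{B_p}^p=\|f_1^2\|_{B_p}^p,\end{eqnarray*}
whence $\|f_1\|_{B_p}=\|f_1^2\|_{B_p}$.

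The final step is to contradict this equality by a direct computation in polar coordinates. One has $\|z\|_{B_p}^p=\int_{\mathbb{D}}(1-|z|^2)^{p-2}\,dA(z)$ and $\|z^2\|_{B_p}^p=2^p\int_{\mathbb{D}}|z|^p(1-|z|^2)^{p-2}\,dA(z)$; both reduce to Beta integrals and take distinct values for every $p\in(1,\infty)$ (for instance at $p=2$ they equal $\pi$ and $2\pi$). This contradiction shows that no hypercyclic vector can exist, so $C_\varphi$ is not hypercyclic on $B_p$.

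I expect the only genuinely delicate point to be the decoupling in the second paragraph: the argument succeeds precisely because the seminorm is constant along the orbit, a feature special to automorphisms, and because Lemma 2.2 (through \emph{Remark} 2.3) lets me control the single evaluation $f(\varphi^{k}(0))$ independently of the seminorm. If one wished to avoid even the Beta-function computation, it would suffice to compare $f_1(z)=z$ with $2f_1(z)=2z$, whose Besov seminorms are $\|z\|_{B_p}$ and $2\|z\|_{B_p}$ and are therefore automatically unequal.
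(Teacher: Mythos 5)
Your proposal retraces the paper's own proof of this theorem almost verbatim: the same orbit identity $\|C_\varphi^k f\|_p^p=|f(\varphi^k(0))|^p+\|f\|_{B_p}^p$ coming from (\ref{1.2}), the same targets $f_1(z)=z$ and $f_1^2(z)=z^2$, the same appeal to \emph{Remark} 2.3 to remove the evaluation term, and the same Beta-function evaluations (the paper's (\ref{4.1}) and (\ref{4.2})). However, your main line --- and the paper's proof along with it --- has a genuine gap: the claim that $\|z\|_{B_p}$ and $\|z^2\|_{B_p}$ ``take distinct values for every $p\in(1,\infty)$'' is false. Whatever normalization of $dA$ is used, the ratio
\begin{equation*}
\frac{\|z^2\|_{B_p}^p}{\|z\|_{B_p}^p}
=2^p(p-1)\,B\!\left(\tfrac{p}{2}+1,\,p-1\right)
=\frac{2^p\,\Gamma(\tfrac{p}{2}+1)\,\Gamma(p)}{\Gamma(\tfrac{3p}{2})}=:h(p)
\end{equation*}
is a continuous function of $p$, and one computes $h(6)=\tfrac{8}{7}>1$ while, using $\Gamma(10.5)=9.5\cdot 8.5\cdot 7.5\cdot 6.5\cdot 5.5\cdot 4.5\,\Gamma(4.5)$, one gets $h(7)=128\cdot 720/(4.5\cdot 5.5\cdot 6.5\cdot 7.5\cdot 8.5\cdot 9.5)\approx 0.946<1$. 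By the intermediate value theorem there is a $p^{*}\in(6,7)$ with $h(p^{*})=1$, that is, $\|z\|_{B_{p^{*}}}=\|z^{2}\|_{B_{p^{*}}}$. At that exponent the two limits you compute coincide and no contradiction is obtained; checking the inequality at $p=2$ (your $\pi$ versus $2\pi$) certifies nothing for other values of $p$. The paper makes the same unverified assertion when it declares (\ref{4.1}) and (\ref{4.2}) contradictory, so its proof is likewise incomplete at $p=p^{*}$.

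The good news is that your closing remark is exactly the repair, and it should be the argument rather than a footnote: take the targets $z$ and $2z$. The identical reasoning yields $\|f\|_{B_p}^p=\|z\|_{B_p}^p$ along one subsequence and $\|f\|_{B_p}^p=\|2z\|_{B_p}^p=2^p\|z\|_{B_p}^p$ along the other, forcing $(2^p-1)\|z\|_{B_p}^p=0$, which is impossible because $z$ is nonconstant (indeed $\|z\|_{B_p}^p=\pi/(p-1)>0$ with the paper's $dA$). This version needs no special-function identities, works uniformly for all $1<p<\infty$, and cannot be defeated by an accidental coincidence of norms; it also closes the gap in the paper's own proof at the exceptional exponent $p^{*}$. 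As written, though, your primary argument inherits that gap rather than avoiding it, so you should promote the $z$ versus $2z$ comparison to be the proof.
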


\begin{proof}
Suppose that there is an $f\in B_p$ such that $\{C_{\varphi}^k f:\;k\in \mathbb{N}\cup\{0\}\}$ is dense in $B_p$. By (\ref{1.2}) it follows that
\begin{eqnarray*} \|C_\varphi^k f\|_{p}^p&=&|f(\varphi^k(0))|^p+\|f\circ \varphi^k\|_{B_p}^p
\nonumber\\&=& |f(\varphi^k(0))|^p+\|f\|_{B_p}^p. \end{eqnarray*}
For $f_1(z)=z\in B_p,$ there exists a subsequence $\{\varphi^{k_j}\}$ such that
\begin{eqnarray*} \|C_{\varphi}^{k_j} f-f_1\|_{p}\rightarrow 0\;\;\mbox{as}\;\;j\rightarrow\infty. \end{eqnarray*}
From which and  \emph{Remark} 2.3, it is clear that for $z=0,$
\begin{eqnarray*} f(\varphi^{k_j}(0))\rightarrow 0\;\;\mbox{as}\;\; j\rightarrow \infty.\end{eqnarray*}
By the above three formulas, we have
\begin{eqnarray}\|f\|_{p}^p&=&|f(0)|^p+\|f\|_{B_p}^p\nonumber\\&=&|f(0)|^p+\|C_\varphi^{k_j} f\|_{p}^p-|f(\varphi^{k_j}(0))|^p\nonumber\\&\rightarrow& |f(0)|^p+\|f_1\|_{p}^p,\;\;j\rightarrow\infty \nonumber\\&=&
                                                 |f(0)|^p+\frac{1}{2(p-1)}.\label{4.1}
 \end{eqnarray}
On the other hand,  there exists another sequence $\{\varphi^{\hat{k}_j}\}$ such that
\begin{eqnarray*} \|C_{\varphi}^{\hat{k}_j} f -f_1^2\|_{p} \rightarrow 0\;\;\mbox{as}\;\;j\rightarrow \infty.\end{eqnarray*}
Similarly, $f\circ \varphi^{\hat{k}_j}(0)\rightarrow 0\;\;\mbox{as}\;\;j\rightarrow \infty.$ Using the $Beta$ function $$B(p,q)=\int_0^1 x^{p-1}(1-x)^{q-1} dx,\;\;\mbox{for}\;\;p>0,\;q>0,$$ it follows that
\begin{eqnarray} \|f\|_{p}^p&=&|f(0)|^p +\|f\|_{B_p}^p\nonumber\\&=& |f(0)|^p +\|C_\varphi^{\hat{k}_j}f\|_{p}^p-|f\circ \varphi^{\hat{k}_j}(0)|\nonumber\\&\rightarrow&|f(0)|^p +\|f_1^2\|_{p}^p,\;\;j\rightarrow \infty\nonumber\\&=&                                                                                            |f(0)|^p+2^{p-1}B(\frac{p}{2}+1,p-1).\label{4.2} \end{eqnarray}
From (\ref{4.1}) and (\ref{4.2}), we get a contraction. Thus the composition operator $C_\varphi$ is not hypercyclic on $B_p \;(1< p<\infty).$ This completes the proof.
\end{proof}

$\textbf{Case\; II}.$ $\varphi \notin Aut (\mathbb{D}),$ we only consider the case $p-2\geq0.$
\begin{thm} Suppose $\varphi\notin Aut(\mathbb{D})$ and $\varphi$ has no interior fixed point in $\mathbb{D}$. Further assume that  the composition operator $C_\varphi$ is bounded on $B_p\;(2\leq p<\infty)$, then $C_\varphi$ is still not hypercyclic on $B_p.$
\end{thm}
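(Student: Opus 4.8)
The plan is to adapt the little Bloch space argument of Theorem 3.2, where the role played there by the Schwarz--Pick bound on the Bloch seminorm is now played by an integral estimate on the $B_p$ seminorm in which the hypothesis $p\geq 2$ is decisive. I would argue by contradiction. Assume $C_\varphi$ is hypercyclic on $B_p$ and let $f\in B_p$ be a hypercyclic vector. By Lemma 2.6 the symbol $\varphi$ is univalent; since a composition of injective maps is injective, each iterate $\psi:=\varphi^{k}$ is univalent on $\mathbb{D}$. This univalence is exactly what will permit the change-of-variable computation below.

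Fix $n\in\mathbb{N}$ and set $f_1(z)=z$. Density of the orbit produces a subsequence $\{\varphi^{k_j}\}$ with $\|C_{\varphi^{k_j}}f-nf_1\|_p\to 0$. By \emph{Remark} 2.3, norm convergence in $B_p$ forces pointwise convergence, so evaluating at $z=0$ gives $f(\varphi^{k_j}(0))\to 0$.

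The heart of the proof is the uniform seminorm bound $\|C_{\varphi^{k_j}}f\|_{B_p}\leq\|f\|_{B_p}$. Writing $\psi=\varphi^{k_j}$ and splitting $|\psi'|^p=|\psi'|^2\,|\psi'|^{p-2}$, I rewrite the integrand of $\|C_\psi f\|_{B_p}^p$ as
\begin{eqnarray*}
|f'(\psi(z))|^p\,|\psi'(z)|^2\,\bigl[(1-|z|^2)|\psi'(z)|\bigr]^{p-2}.
\end{eqnarray*}
The Schwarz--Pick Lemma (Lemma 2.7) gives $(1-|z|^2)|\psi'(z)|\leq 1-|\psi(z)|^2$, and because $p-2\geq 0$, which is the only place the restriction $2\leq p<\infty$ enters, the bracketed factor is dominated by $(1-|\psi(z)|^2)^{p-2}$. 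Hence
\begin{eqnarray*}
\|C_\psi f\|_{B_p}^p\leq\int_{\mathbb{D}}|f'(\psi(z))|^p\,(1-|\psi(z)|^2)^{p-2}\,|\psi'(z)|^2\,dA(z).
\end{eqnarray*}
Since $\psi$ is univalent and the real Jacobian of the holomorphic map $\psi$ equals $|\psi'|^2$, the substitution $w=\psi(z)$ converts the right-hand side into $\int_{\psi(\mathbb{D})}|f'(w)|^p(1-|w|^2)^{p-2}\,dA(w)$, which is at most $\|f\|_{B_p}^p$ because $\psi(\mathbb{D})\subseteq\mathbb{D}$ and the integrand is nonnegative. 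This yields $\|C_{\varphi^{k_j}}f\|_{B_p}\leq\|f\|_{B_p}$.

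Finally I combine the estimates. The identity $\|C_{\varphi^{k_j}}f\|_p^p=|f(\varphi^{k_j}(0))|^p+\|C_{\varphi^{k_j}}f\|_{B_p}^p$, together with $f(\varphi^{k_j}(0))\to 0$ and the seminorm bound, shows the sequence $\|C_{\varphi^{k_j}}f\|_p$ is eventually at most $\|f\|_{B_p}$. But norm continuity under the convergence $C_{\varphi^{k_j}}f\to nf_1$ forces $\|C_{\varphi^{k_j}}f\|_p\to\|nf_1\|_p=n\|f_1\|_p$, a positive multiple of $n$. Hence $n\|f_1\|_p\leq\|f\|_{B_p}$ for every $n\in\mathbb{N}$, which is impossible, and so $C_\varphi$ cannot be hypercyclic on $B_p$. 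I expect the seminorm bound $\|C_{\varphi^{k_j}}f\|_{B_p}\leq\|f\|_{B_p}$ to be the main obstacle: it requires simultaneously the Schwarz--Pick factorization, which is useful only when $p\geq 2$, and the change-of-variable formula, whose legitimacy rests on the univalence of the iterates furnished by Lemma 2.6.
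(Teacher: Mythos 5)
Your proposal is correct and follows essentially the same route as the paper: a contradiction argument in which Lemma 2.6 supplies univalence of the iterates, the Schwarz--Pick factorization $|\psi'|^p(1-|z|^2)^{p-2}=|\psi'|^2\bigl[(1-|z|^2)|\psi'|\bigr]^{p-2}$ (using $p\geq 2$) plus the change of variables $w=\psi(z)$ yields the uniform bound $\|C_{\varphi^{k_j}}f\|_{B_p}\leq\|f\|_{B_p}$, while the orbit's closure contains functions of arbitrarily large norm. The only cosmetic differences are your test functions $nz$ in place of the paper's $nz^2$ and a slightly more careful phrasing of the final limit comparison.
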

\begin{proof}
Suppose that $C_\varphi$ is hypercyclic on $B_p$ and  $f\in B_p$ is a hypercyclic vector for $C_\varphi$. Then for each $g\in B_p$, there exists   $\{\varphi^{k}\}_k$ satisfying
\begin{eqnarray*} \|C_{\varphi^{k}}f -g\|_{p}\rightarrow 0\;\;\mbox{as}\;\;k\rightarrow \infty. \end{eqnarray*}
In particular, we choose $g(z)=nz^2$ for a fixed $n\in \mathbb{N},$  then there exists a subsequence $\{\varphi^{k_j}\}_j$ such that
\begin{eqnarray*} \|C_{\varphi^{k_j}} f-nz^2\|_{p}\rightarrow 0 \;\;\mbox{as}\;\;j\rightarrow \infty. \end{eqnarray*}
From which and \emph{Remark} 2.3, it follows that \begin{eqnarray}f(\varphi^{k_j}(0))\rightarrow 0\;\;\mbox{as}\;\;j\rightarrow\infty.\label{4.3}\end{eqnarray}
For $p\geq 2$, by Schwarz-Pick lemma (Lemma 2.7) and  Lemma 2.6,  we get that
\begin{eqnarray} &&\|C_{\varphi^{k_j}} f\|_{p}^p=|f(\varphi^{k_j}(0))|^p+ \|f\circ \varphi^{k_j}\|_{B_p}^p  \nonumber
\\&&= |f(\varphi^{k_j}(0))|^p+\int_{\mathbb{D}}|(f\circ \varphi^{k_j})' (z) |^p(1-|z|^2)^{p-2}dA(z) \nonumber\\&&=  |f(\varphi^{k_j}(0))|^p+\int_{\mathbb{D}}|f'(\varphi^{k_j} (z)) (\varphi^{k_j})'(z) |^p (1-|z|^2)^{p-2} dA(z)
\nonumber\\&& =  |f(\varphi^{k_j}(0))|^p+\int_{\mathbb{D}}\frac{(1-|\varphi^{k_j}(z)|^2)^p|f'(\varphi^{k_j} (z))|^p}{(1-|\varphi^{k_j}(z)|^2)^p}|(\varphi^{k_j})'(z) |^p (1-|z|^2)^{p-2} dA(z)
\nonumber\\&&=  |f(\varphi^{k_j}(0))|^p+\int_{\mathbb{D}}  \left(\frac{(1-|z|^2)|(\varphi^{k_j})'(z)|}{ 1-|\varphi^{k_j}(z)|^2 }\right)^{p-2}\frac{ (1-|\varphi^{k_j}(z)|^2)^p|f'(\varphi^{k_j} (z))|^p|(\varphi^{k_j})'(z)|^2}{( 1-|\varphi^{k_j}(z)|^2)^{2}} dA(z)
\nonumber\\&&\leq   |f(\varphi^{k_j}(0))|^p+\int_{\mathbb{D}}\frac{ (1-|\varphi^{k_j}(z)|^2)^p|f'(\varphi^{k_j} (z))|^p|(\varphi^{k_j})'(z)|^2}{( 1-|\varphi^{k_j}(z)|^2)^{2}} dA(z)\;\;(\varphi \;\mbox{is univalent})\nonumber\\&& =    |f(\varphi^{k_j}(0))|^p+\int_{\varphi^{k_j}(\mathbb{D})} |f'(w)|^p (1-|w|^2)^{p-2} dA(w)
\nonumber\\&&\leq   |f(\varphi^{k_j}(0))|^p+\int_{ \mathbb{D}} |f'(w)|^p (1-|w|^2)^{p-2} dA(w)
 \nonumber\\&&=   |f(\varphi^{k_j}(0))|^p+ \|f\|_{B_p}^p\nonumber\\&&\rightarrow  \|f\|_{B_p}^p=\|f\|_p^p-|f(0)|^p<\infty,\;\;j\rightarrow \infty. \label{4.4} \end{eqnarray}
At the same time, since \begin{eqnarray*}\|nz^2\|_{p}^p= n^p 2^{p-1}B(\frac{p}{2}+1,p-1)\rightarrow \infty\;\;\mbox{as}\;\;n\rightarrow\infty.\end{eqnarray*}
Thus \begin{eqnarray} \|C_{\varphi^{k_j}} f\|_{p}^p\rightarrow\infty\;\;\mbox{as}\;\;j\rightarrow \infty.\label{4.5}\end{eqnarray}
From the above,  comparing  (\ref{4.4}) with (\ref{4.5}),
 we get a contraction. Thus  $C_\varphi$ is not hypercyclic on $B_p \;(2\leq p<\infty).$ This completes the proof. \end{proof}
\begin{cor} For any $\lambda\in \mathbb{C}$ with $|\lambda|=1$ and $\varphi\in S(\mathbb{D})$. Suppose that the composition operator $C_\varphi$ is bounded in $B_p\;(2\leq p<\infty),$ then the operator  $\lambda C_\varphi$ is not hypercyclic on $B_p$. \end{cor}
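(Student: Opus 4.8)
The plan is to reduce the statement to the already-established non-hypercyclicity of $C_\varphi$ itself, since the unimodular scalar $\lambda$ costs nothing by Lemma 2.8, and then to dispose of every admissible symbol $\varphi$ by a three-way case split on fixed-point behaviour. First I would argue that $C_\varphi$ is not hypercyclic on $B_p$ $(2\leq p<\infty)$ for \emph{any} $\varphi\in S(\mathbb{D})$ for which $C_\varphi$ is bounded. Partition the symbols as follows. If $\varphi\in Aut(\mathbb{D})$, Theorem 4.1 already yields non-hypercyclicity for all $1<p<\infty$, in particular for $2\leq p<\infty$. If $\varphi\notin Aut(\mathbb{D})$ but possesses an interior fixed point in $\mathbb{D}$, Lemma 2.4 applies and rules out hypercyclicity (again for the full range $1<p<\infty$). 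Finally, if $\varphi\notin Aut(\mathbb{D})$ and $\varphi$ has no interior fixed point, Theorem 4.2 covers exactly the range $2\leq p<\infty$. These three cases are mutually exclusive and exhaustive, so for $2\leq p<\infty$ the operator $C_\varphi$ is never hypercyclic; equivalently its hypercyclic set is empty, $HC(C_\varphi)=\emptyset$.

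With this in hand, the passage from $C_\varphi$ to $\lambda C_\varphi$ is immediate. By Lemma 2.8, any unimodular $\lambda$ satisfies $HC(\lambda C_\varphi)=HC(C_\varphi)$, recorded in (\ref{2.2}). Since the right-hand side is empty by the preceding paragraph, $HC(\lambda C_\varphi)=\emptyset$ as well, which is precisely the assertion that $\lambda C_\varphi$ is not hypercyclic on $B_p$.

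I expect no genuine obstacle here: the corollary is really a repackaging of Theorems 4.1--4.2 together with Lemma 2.4 and Lemma 2.8. The only point demanding care is the bookkeeping on the parameter range. Theorem 4.2 is the step that restricts the conclusion to $p\geq 2$, because its estimate (\ref{4.4}) exploits the exponent $p-2\geq 0$ when the Schwarz--Pick bound is inserted inside the integral; the automorphism case and the interior-fixed-point case impose no such restriction, so the binding constraint comes solely from the case $\varphi\notin Aut(\mathbb{D})$ with no fixed point in $\mathbb{D}$. One should also confirm that the boundedness hypothesis on $C_\varphi$ assumed in the corollary is exactly what each cited result requires, which it is, so no extra verification of boundedness is needed.
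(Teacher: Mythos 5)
Your proposal is correct and coincides with the paper's intended argument: the paper states this corollary without proof, relying exactly on the combination of Theorem 4.1 ($\varphi\in Aut(\mathbb{D})$), Lemma 2.4 (interior fixed point, via Remark 2.5), and Theorem 4.2 (non-automorphism without interior fixed point) to conclude $HC(C_\varphi)=\emptyset$, and then on Lemma 2.8, i.e.\ identity (\ref{2.2}), to transfer this to $\lambda C_\varphi$. Your additional bookkeeping on the parameter range, isolating Theorem 4.2 as the sole source of the restriction $p\geq 2$, is accurate and consistent with the paper's open question for $1<p<2$.
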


\emph{Open question:} Is the composition operator $C_\varphi$ hypercyclic on the space $B_p\;(1<p<2)$ with  $\varphi\notin Aut(\mathbb{D})$?

\end{document}